 \newtheorem{thm}{Theorem}[section]
 \theoremstyle{definition}
 \theoremstyle{remark}
 \newtheorem{rem}[thm]{Remark}
 \numberwithin{equation}{section}
\newcommand{\Mk}{\mathcal{M}_k}
\newcommand{\Mkk}{\mathcal{M}_{k-1}}
\newcommand{\Hk}{\mathcal{H}_k}
\newcommand{\Rm}{\mathbb{R}^m}
\newcommand{\C}{\mathbb{C}}
\newcommand{\Clm}{\mathcal{C}l_m}
\newcommand{\Sm}{\mathbb{S}^{m-1}}
\newcommand{\Dodd}{\mathcal{D}_{2j-1}}
\begin{document}

%
%
%
%
%
%
%
%
%

\title[Integral Formulas for Higher Order Fermionic Operators]
 {Integral Formulas for Higher Order Conformally Invariant Fermionic Operators}

\author{Chao Ding}

\address{%
Bauhaus-Universit\"{a}t Weimar\\
Institute of Mathematics/Physics\\
 Coudraystr. 13B\\
  99423 Weimar\\
 Germany}

\email{chaoding1985@gmail.com}

\thanks{This research is supported by Bauhaus-Postdoc-$\text{Profil}^+$-Scholarship 2018/19.}
\subjclass{Primary 30Gxx, Secondary 42Bxx, 46F12, 58Jxx.}

\keywords{Fermionic operators, Stokes' Theorem, Borel-Pompeiu formula, Cauchy's integral formula.}

\date{}
\dedicatory{Communicated by Vladim\'{i}r Sou\v{c}ek}

\begin{abstract}
In this paper, we establish higher order Borel-Pompeiu formulas for conformally invariant fermionic operators in higher spin theory, which is the theory of functions on $m$-dimensional Euclidean space taking values in arbitrary irreducible representations of the Spin group. As applications, we provide higher order Cauchy's integral formulas for those fermionic operators. This paper continues the work of building up basic integral formulas for conformally invariant differential operators in higher spin theory. 
\end{abstract}

\maketitle
\section{Introduction}
\indent\indent
Classical Clifford analysis starts from \cite{Brackx} the study of Dirac type operators and monogenic functions (null solutions of the Dirac operator), which are generalizations of holomorphic functions in one dimensional complex analysis. One of the most important results is the Borel-Pompeiu formula, which leads to various higher dimensional analogs of well-known results in one dimensional complex analysis. For instance, Cauchy's integral formula, Cauchy theorem, etc. Borel-Pompeiu formulas are also important for solving certain types of boundary value problems for partial differential equations, for instance, \cite{Begehr3,Shapiro}. In the framework of Clifford analysis, higher order integral formulas were studied by many authors, see \cite{Begehr1,Begehr2,Begehr3,Reyes,JohnCauchy,Zhang}.
\par
In the past few decades, many authors \cite{B1,Bures,Ding0,Ding3,D,Eelbode} have been working on generalizations of classical Clifford analysis to the so-called higher spin theory. This investigates higher spin operators acting on functions on $\Rm$ taking values in irreducible representations of the Spin group. In Clifford analysis, these irreducible representations are traditionally constructed as spaces of homogeneous polynomials satisfying certain differential equations. A first order conformally invariant differential operator, called Rarita-Schwinger operator, was first studied systematically in \cite{Bures} and revisited in \cite{D} with different techniques. In both papers, integral formulas, such as Borel-Pompeiu formula, Cauchy's integral formula, were established. A second order conformally invariant differential operator, called the generalized Maxwell operator or the higher spin Laplace operator, was constructed recently in \cite{B1,Eelbode}. Some basic integral formulas for the higher spin Laplace operator were also found recently, see \cite{Ding1}. In \cite{Ding3}, the authors studied the construction of arbitrary order conformally invariant differential operators in higher spin spaces. These are fermionic operators when the orders are odd and bosonic operators when the orders are even. However, integral formulas for higher order ($\geq 3$) conformally invariant differential operators have not been found yet. In this paper, we will continue this work by establishing Borel-Pompeiu formulas for higher order fermionic operators. As applications, higher order Cauchy's integral formulas for fermionic operators are provided.
\par
This paper is organized as follows. In Section 2, we introduce the framework of Clifford analysis and some standard facts in higher spin theory. For instance, definitions and fundamental solutions for fermionic operators are provided here. Section 3 is devoted to a summary of Stokes' Theorems for Rarita-Schwinger type operators, which are the main tools used in this paper. In Section 4, we demonstrate higher order Borel-Pompeiu formulas for fermionic operators and higher order Cauchy's integral formulas are provided as a special case.

\section{Preliminaries}
\indent\indent
Let $\{e_1,e_2,\cdots,e_m\}$ be an orthonormal basis for the $m$-dimensional Euclidean space $\Rm$. The real Clifford algebra is generated by these basis elements with the defining relations $$e_i e_j + e_j e_i= -2\delta_{ij},\ 1\leq i,j\leq m,
$$ where $\delta_{ij}$ is the Kronecker delta function. An arbitrary element of the basis of the Clifford algebra can be written as $e_A=e_{j_1}\cdots e_{j_r},$ where $A=\{j_1, \cdots, j_r\}\subset \{1, 2, \cdots, m\}$ and $1\leq j_1< j_2 < \cdots < j_r \leq m.$
Hence for any element $a\in \mathcal{C}l_m$, we have $a=\sum_Aa_Ae_A,$ where $a_A\in \mathbb{R}$. The complex Clifford algebra $\Clm (\C)$ is defined as the complexification of the real Clifford algebra
$$\Clm (\C)=\Clm\otimes_{\mathbb{R}}\C.$$
We consider real Clifford algebra $\Clm$ throughout this subsection, but in the rest of the paper we consider the complex Clifford algebra $\Clm (\mathbb{C})$ unless otherwise specified. We define $\widetilde{e_{j_1}\cdots e_{j_r}}=e_{j_r}\cdots e_{j_1}$ and $\overline{e_{j_1}\cdots e_{j_r}}=(-1)^re_{j_r}\cdots e_{j_1}$, which are two types of Clifford anti-involutions, see \cite{Brackx} for more details.
\par
The Pin and Spin groups play important roles in Clifford analysis. We define $$Pin(m)=\{a\in \mathcal{C}l_m: a=y_1y_2\dots y_{p},\ y_1,\dots,y_{p}\in\mathbb{S}^{m-1},p\in\mathbb{N}\},$$ 
where $\mathbb{S} ^{m-1}$ is the unit sphere in $\Rm$. $Pin(m)$ is clearly a multiplicative group in $\mathcal{C}l_m$ and it is actually a double cover of $O(m)$. Indeed, suppose $v\in \mathbb{S}^{m-1}\subseteq \mathbb{R}^m$, and if we consider $vxv$, we may decompose
$$x=x_{v\parallel}+x_{v\perp},$$
where $x_{v\parallel}$ is the projection of $x$ onto $v$ and $x_{v\perp}$ is the remainder part, which is perpendicular to $v$. Hence $x_{v\parallel}$ is a scalar multiple of $v$ and we have
$$vxv=vx_{v\parallel}v+vx_{v\perp}v=-x_{v\parallel}+x_{v\perp}.$$
So the action $vxv$ describes a reflection of $x$ in the direction of $v$. By the Cartan-Dieudonn$\acute{e}$ Theorem each $O\in O(m)$ is the composition of a finite number of reflections. If $v=y_1\cdots y_p\in Pin(m),$ then $\tilde{v}:=y_p\cdots y_1$ and we observe $vx\tilde{v}=O_v(x)$ for some $O_v\in O(m)$. Choosing $y_1,\ \dots,\ y_p$ arbitrarily in $\mathbb{S}^{m-1}$, we have that the group homomorphism
\begin{align*}
\theta:\ Pin(m)\longrightarrow O(m)\ :\ v\mapsto O_v,
\end{align*}
with $v=y_1\cdots y_p$ and $O_vx=vx\tilde{v}$ is surjective. Further, since $-vx(-\tilde{v})=vx\tilde{v}$, so $1,\ -1\in \ker(\theta)$. In fact $\ker(\theta)=\{1,\ -1\}$. See \cite{P1}. The Spin group is defined as
$$Spin(m)=\{a\in \mathcal{C}l_m: a=y_1y_2\dots y_{2p},\ y_1,\dots,y_{2p}\in\mathbb{S}^{m-1},p\in\mathbb{N}\}$$
 and it is a subgroup of $Pin(m)$. With the same group homomorphism $\theta$ defined above, one can see that $Spin(m)$ is the double cover of $SO(m)$. See \cite{P1} for more details.
\par
The Dirac operator in $\mathbb{R}^m$ is given by $D_x:=\sum_{i=1}^{m}e_i\partial_{x_i}$, where $\partial_{x_i}$ stands for the partial derivative with respect to $x_i$. Notice that $D_x^2=-\Delta_x$, where $\Delta_x$ is the Laplacian in $\mathbb{R}^m$.  A $\Clm$-valued function $f(x)$ defined on a domain $U$ in $\Rm$ is left monogenic if $D_xf(x)=0.$ Sometimes, we will consider the Dirac operator $D_u$ in a variable $u$ rather than $x$.
\par 
Let $\mathcal{M}_k$ denote the space of $\mathcal{C}l_m$-valued monogenic polynomials homogeneous of degree $k$, and $\Hk$ stands for the space of  $\mathcal{C}l_m$-valued harmonic polynomials homogeneous of degree $k$, then we have the following well-known \emph{Almansi-Fischer decomposition} of $\Hk$, see \cite{D} for more details.
$$\mathcal{H}_k=\mathcal{M}_k\oplus u\mathcal{M}_{k-1}.$$
In this Almansi-Fischer decomposition, we have $P_k^+$ and $P_k^-$ as the projection maps 
\begin{align*}
&P_k^+=1+\frac{uD_u}{m+2k-2}: \mathcal{H}_k\longrightarrow \mathcal{M}_k,\\
&P_k^-=I-P_k^+=\frac{-uD_u}{m+2k-2}:\ \mathcal{H}_k\longrightarrow u\mathcal{M}_{k-1}.
\end{align*}
Suppose $U$ is a domain in $\mathbb{R}^m$, we consider a differentiable function $f: U\times \mathbb{R}^m\longrightarrow \mathcal{C}l_m$
such that, for each $x\in U$, $f(x,u)$ is a left monogenic polynomial homogeneous of degree $k$ in $u$. Then the Rarita-Schwinger operator \cite{Bures,D} is given by 
 $$R_k=P_k^+D_x:\ C^{\infty}(\Rm,\Mk)\longrightarrow C^{\infty}(\Rm,\Mk).$$
 We also have the following three more Rarita-Schwinger type operators.
 \begin{align*}
	&\text{\textbf{The twistor operator:}}\\
	& T_k=P_k^+D_x:\ C^{\infty}(\Rm,u\Mkk)\longrightarrow C^{\infty}(\Rm,\Mk),\\
	&\text{\textbf{The dual twistor operator:}}\\
	& T_k^*=P_k^-D_x:\ C^{\infty}(\Rm,\Mk)\longrightarrow C^{\infty}(\Rm,u\Mkk),\\
	&\text{\textbf{The remaining operator:}}\\
	& Q_k=P_k^-D_x:\ C^{\infty}(\Rm,u\Mkk)\longrightarrow C^{\infty}(\Rm,u\Mkk).
	\end{align*}
	See \cite{Bures,D} for more details. These operators can also be constructed as Stein-Weiss gradients and they are special cases of the higher spin Dirac operator, see \cite{Des,EeRa}.
	\par
	The $(2j-1)$-th order ($j>1$) conformally invariant differential operator in higher spin theory, called the $(2j-1)$-th order fermionic operator \cite{Ding3}, is given by
	\begin{align*}
\Dodd=R_k\prod_{s=1}^{j-1}\big(a_sT_kT_k^*+b_sR_k^2\big),
\end{align*}
where 
\begin{align*}
&a_s=\frac{-4s^2}{(m+2k-2s-2)(m+2k+2s-2)},\ 1\leq s\leq j-1,\\
&b_s=1,\ 1\leq s\leq j-1.
\end{align*}
The fundamental solution for $\Dodd$ is provided in the same reference. That is,
\begin{align*}
E_k^{2j-1}(x,u,v)=\lambda_{2j-1}\frac{x}{||x||^{m-2j+2}}Z_k\bigg(\frac{xux}{||x||^2},v\bigg),
\end{align*}
where $\lambda_{2j-1}$ is a nonzero constant given in \cite{Ding3}, and $Z_k(u,v)$ is the reproducing kernel for $\Mk$, which satisfies
\begin{align*}
f(v)=\int\displaylimits_{\Sm}\overline{Z_k(u,v)}f(u)dS(u),\ \text{for\ any}\ f(v)\in\Mk. 
\end{align*}
In particular, $E_k^1(x,u,v)$ is the fundamental solution for the Rarita-Schwinger operator $R_k$.
To conclude this section, we point out that, if $D_uf(u)=0$ then $\overline{D_uf(u)}=-\overline{f(u)}D_u=0$. Hence, we can define right monogenic functions, right monogenic polynomials with homogeneity of degree $k$, right Almansi-Fischer decomposition of $\Hk$, etc. In other words, for all the results we introduced above, we have their analogs for right monogenic functions.
\section{Stokes' Theorem for Rarita-Schwinger Type Operators}
For convenience, we review Stokes' Theorem for Rarita-Schwinger type operators as follows. More details can be found in \cite{Bures,Ding0,D}.
\begin{thm}\cite{D}\textbf{(Stokes' Theorem for $R_k$)}\label{StokesRk}\\
Let $\Omega '$ and $\Omega$ be domains in $\mathbb{R}^m$ and suppose the closure of $\Omega$ lies in $\Omega '$. Further suppose the closure of $\Omega$ is compact and $\partial \Omega$ is piecewise smooth. Let $f,g\in C^{\infty}(\Omega ',\Mk)$. Then
\begin{align*}
&\int\displaylimits_{\Omega}\big[(g(x,u)R_k,f(x,u))_u+(g(x,u),R_kf(x,u))_u\big]dx^m\\
=&\int\displaylimits_{\partial\Omega}(g(x,u),d\sigma_xf(x,u))_u.
\end{align*}
where $d\sigma_x=n(x)d\sigma(x)$, $d\sigma(x)$ is the area element. 
\begin{align*}
(P(u),Q(u))_u=\int\displaylimits_{\mathbb{S}^{m-1}}P(u)Q(u)dS(u)
\end{align*}
 is the inner product for any pair of $\Clm$-valued polynomials.
\end{thm}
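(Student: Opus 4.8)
The plan is to derive the identity from the classical Clifford-analytic Stokes' theorem for the Dirac operator, applied with $u\in\Sm$ frozen, and then to absorb the Almansi--Fischer projection $P_k^+$ using its self-adjointness with respect to $(\cdot,\cdot)_u$. First I would fix $u\in\Sm$ and regard $x\mapsto f(x,u)$, $x\mapsto g(x,u)$ as ordinary $\Clm$-valued $C^\infty$ functions on a neighbourhood of $\overline{\Omega}$; since the hypotheses on $\Omega\subset\subset\Omega'$ and on $\partial\Omega$ are exactly those of the classical statement (see \cite{Brackx}), one gets
\begin{align*}
\int\displaylimits_{\Omega}\big[(g(x,u)D_x)f(x,u)+g(x,u)(D_xf(x,u))\big]\,dx^m=\int\displaylimits_{\partial\Omega}g(x,u)\,d\sigma_x\,f(x,u),
\end{align*}
with $g(x,u)D_x=\sum_i(\partial_{x_i}g(x,u))e_i$ and $D_xf(x,u)=\sum_i e_i\partial_{x_i}f(x,u)$. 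For each fixed $x$ the maps $f(x,\cdot),g(x,\cdot)$ are polynomials, hence smooth and jointly smooth on $\Omega'\times\Sm$; integrating the displayed identity over $u\in\Sm$ against $dS(u)$ and interchanging the integrations (legitimate by smoothness and the compactness of $\overline{\Omega}$), the boundary integral becomes exactly $\int_{\partial\Omega}(g(x,u),d\sigma_xf(x,u))_u$ by the definition of $(\cdot,\cdot)_u$, so only the two volume terms remain to be identified.

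For the term $(g(x,u)D_x)f(x,u)$: because $g(x,\cdot)\in\Mk$ and left multiplication by $D_u$ commutes with right multiplication by the constants $e_i$, one has $D_u\big((\partial_{x_i}g)e_i\big)=(D_u\partial_{x_i}g)e_i=0$, so $g(x,u)D_x$ is again $\Mk$-valued and coincides with $g(x,u)R_k$; hence $\int_{\Sm}(g(x,u)D_x)f(x,u)\,dS(u)=(g(x,u)R_k,f(x,u))_u$. For the term $g(x,u)(D_xf(x,u))$: here $D_xf(x,u)=\sum_i e_i\partial_{x_i}f$ is harmonic and $k$-homogeneous in $u$ but in general lies only in $\Hk$, so I would invoke the self-adjointness of the Almansi--Fischer projections $P_k^\pm$ on $\Sm$ (equivalently, the $(\cdot,\cdot)_u$-orthogonality of $\Mk$ and $u\Mkk$), established in \cite{Bures,D}. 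Combined with $P_k^+g=g$ this yields
\begin{align*}
(g(x,u),D_xf(x,u))_u=(P_k^+g(x,u),D_xf(x,u))_u=(g(x,u),P_k^+D_xf(x,u))_u=(g(x,u),R_kf(x,u))_u.
\end{align*}
Adding the two contributions and integrating over $\Omega$ produces the claimed formula.

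The step I expect to be the main obstacle is exactly the algebraic one just used: proving (or correctly invoking) that $P_k^\pm$ are self-adjoint with respect to $(\cdot,\cdot)_u$ --- i.e. that $\int_{\Sm}(uD_uP)(u)Q(u)\,dS(u)=\int_{\Sm}P(u)(uD_uQ)(u)\,dS(u)$ on $k$-homogeneous polynomials --- since this is what lets the projection be transferred from $g$ onto $D_xf$. Everything else is classical Clifford analysis together with Fubini, the latter depending on the compactness of $\overline{\Omega}$ and the smoothness of $f,g$; one should also keep track of the left/right conventions, which is why the observation that right multiplication by $e_i$ preserves $\Mk$ (so that $gD_x=gR_k$) is needed in order to match the first volume term with the left-hand side of the theorem.
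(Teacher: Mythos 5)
The paper does not actually prove this theorem; it is cited verbatim from \cite{D} (the remark following it merely explains how the boundary term was cosmetically simplified), so there is no in-paper proof to compare against. Your reconstruction nevertheless follows exactly the route that the Rarita--Schwinger literature uses: apply the scalar Clifford--Stokes identity in $x$ for each frozen $u\in\Sm$, integrate against $dS(u)$ by Fubini, and then absorb the Almansi--Fischer projection. Your observation that $g(x,u)D_x=\sum_i(\partial_{x_i}g)e_i$ is already $\Mk$-valued in $u$ (because $D_u$ commutes with $\partial_{x_i}$ and with right multiplication by the constant $e_i$) is correct and is the cleanest way to dispose of the first volume term, with no orthogonality needed there.

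The step you flag as the main obstacle is, however, genuinely delicate, and as written it does not quite go through. The $(\cdot,\cdot)_u$-orthogonality of $\Mk$ and $u\Mkk$ (equivalently, self-adjointness of $P_k^\pm$) that you invoke from \cite{Bures,D} is established there for the \emph{conjugated} pairing $\int_{\Sm}\overline{P(u)}\,Q(u)\,dS(u)$, i.e. with the left slot occupied by a right-monogenic polynomial --- this is the convention tacitly intended by the paper's remark at the end of Section 2. For the bare bilinear form $\int_{\Sm}P(u)Q(u)\,dS(u)$ with $P\in\Mk$ left-monogenic, the orthogonality fails: already for $m=2$, $k=1$, the polynomial $p(u)=u_1-e_1e_2u_2$ lies in $\mathcal{M}_1$, yet $p(u)\,u=(u_1^2+u_2^2)e_1$ so $\int_{\Sm}p(u)\,u\,dS(u)=2\pi e_1\neq 0$, whereas $\int_{\Sm}\overline{p(u)}\,u\,dS(u)=0$ as expected. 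So to make your argument for the term $(g,D_xf)_u=(g,R_kf)_u$ airtight, you must either read $g$ as right-monogenic in $u$ (and then also redo the first-term computation accordingly, since your argument that $gD_x\in\Mk$ used left-monogenicity) or insert the Clifford conjugation in the pairing. This is a convention issue that the statement of the theorem glosses over rather than a flaw in your overall strategy, but it is exactly the point you identified as the crux, and it needs to be pinned down before the proof closes.
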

It is worth pointing out that we shortened the version in \cite{D} by omitting $P_k^+$ in the last two equations on purpose. This can also be found in the proof of this theorem in \cite{D}. Note also that in the Stokes' Theorem above, for $g(x,u)R_k$, the $R_k$ is the right Rarita-Schwinger operator. Since it is positioned on the right hand side, there should be no confusion for this. The reader will see more such terms in the rest of this paper.
\begin{thm}\cite{Ding0}\textbf{(Stokes' Theorem for $T_k$ and $T_k^*$)}\label{StokesTk}\\
Let $\Omega$ and $\Omega '$ be defined as above. Then for $f\in C^{\infty}(\Rm,\Mk)$ and $g\in C^{\infty}(\Rm,u\Mkk)$, we have
\begin{align*}
&\int\displaylimits_{\partial\Omega}\big(g(x,u),d\sigma_xf(x,u)\big)_u\\
=&\int\displaylimits_{\Omega}\big(g(x,u)T_k,f(x,u)\big)_udx^m+\int\displaylimits_{\Omega}\big(g(x,u),T_k^*f(x,u)\big)_udx^m.
\end{align*}
\end{thm}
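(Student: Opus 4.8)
The plan is to deduce the statement from the classical Stokes (Gauss--Green) formula for the Dirac operator $D_x$ on $\Rm$, applied with the variable $u$ held fixed as a parameter, and then to integrate over $\Sm$ and insert the Almansi--Fischer projections at no cost. First, fix $u$ and view $f(\cdot,u),g(\cdot,u)$ as $\Clm$-valued $C^\infty$ functions on $\overline{\Omega}$. Since $\overline{\Omega}$ is compact and $\partial\Omega$ is piecewise smooth, the classical Stokes' Theorem for the Dirac operator gives, pointwise in $u$,
\[
\int\displaylimits_{\partial\Omega} g(x,u)\, d\sigma_x\, f(x,u)=\int\displaylimits_{\Omega}\big[(g(x,u)D_x)f(x,u)+g(x,u)(D_xf(x,u))\big]\,dx^m,
\]
where $gD_x$ is the right Dirac operator and $d\sigma_x=n(x)\,d\sigma(x)$. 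Because $f$ and $g$ are polynomial, hence $C^\infty$, in $u$ and $\Sm$ is compact, I may integrate this identity over $\Sm$ against $dS(u)$ and interchange the order of integration, obtaining
\[
\int\displaylimits_{\partial\Omega}\big(g(x,u),d\sigma_xf(x,u)\big)_u=\int\displaylimits_{\Omega}\Big[\big(g(x,u)D_x,f(x,u)\big)_u+\big(g(x,u),D_xf(x,u)\big)_u\Big]\,dx^m.
\]

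Next I reconcile the right-hand side with the target expression. Since $D_x$ does not act on $u$, for each fixed $x$ both $D_xf(x,\cdot)$ and $g(x,\cdot)D_x$ are harmonic and homogeneous of degree $k$ in $u$, so $P_k^\pm$ apply to them; by definition $T_k^*f=P_k^-D_xf$ and $gT_k=(gD_x)P_k^+$. Moreover $f\in C^\infty(\Rm,\Mk)$ gives $P_k^+f=f$ and $P_k^-f=0$, while $g\in C^\infty(\Rm,u\Mkk)$ gives $P_k^-g=g$ and $P_k^+g=0$. Using that $P_k^+$ and $P_k^-$ are mutually adjoint complementary projections with respect to $(\cdot,\cdot)_u$ -- equivalently, that the two summands $\Mk$ and $u\Mkk$ of the Almansi--Fischer decomposition $\Hk=\Mk\oplus u\Mkk$ are $(\cdot,\cdot)_u$-orthogonal -- I obtain
\[
\big(gT_k,f\big)_u=\big((gD_x)P_k^+,f\big)_u=\big(gD_x,P_k^+f\big)_u=\big(gD_x,f\big)_u,
\]
\[
\big(g,T_k^*f\big)_u=\big(g,P_k^-(D_xf)\big)_u=\big(P_k^-g,D_xf\big)_u=\big(g,D_xf\big)_u.
\]
Adding these two identities and substituting into the integrated Stokes formula above yields exactly the asserted equality.

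The only non-formal ingredient, and the step I expect to be the main obstacle, is the self-adjointness of the projections $P_k^\pm$ with respect to $(\cdot,\cdot)_u$ (equivalently, the orthogonality of $\Mk$ and $u\Mkk$ inside $\Hk$). I would establish this by an integration-by-parts argument on $\Sm$: one shows that multiplication by $u$ and the operator $D_u$ are adjoint up to an explicit scalar factor arising from the Euler operator acting on homogeneous polynomials restricted to the sphere, so that $\big(uD_uP,Q\big)_u$ and $\big(P,uD_uQ\big)_u$ coincide for $P,Q\in\Hk$; this is essentially contained in \cite{Bures,D}. Once that lemma is available, the remainder of the argument is bookkeeping with the projection identities and the classical Stokes formula. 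A minor point still to be verified is the legitimacy of interchanging $\int_{\Sm}$ with $\int_{\Omega}$ and with $\int_{\partial\Omega}$, which is immediate here since the integrands are continuous and $\overline{\Omega}$, $\partial\Omega$, and $\Sm$ are all compact.
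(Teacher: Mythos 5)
The paper offers no proof of this statement; it is quoted from \cite{Ding0}, so there is no in-text argument to compare against. Your strategy — apply the scalar Clifford--Stokes formula for $D_x$ pointwise in $u$, integrate over $\Sm$ under the sign of integration, and then discard the Almansi--Fischer projections by orthogonality — is the standard route in this literature and is almost certainly what \cite{Ding0} (and \cite{D,Bures} for the $R_k$ case) do, so the overall plan is sound.

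The one step that needs tightening is the ``self-adjointness of $P_k^\pm$.'' Note first that $(\cdot,\cdot)_u$ is Clifford-algebra valued, so it is neither symmetric nor sesquilinear, and ``mutually adjoint complementary projections'' is not the right description. Second, when $D_x$ hits $g$ from the right, the projection that appears is the \emph{right} Almansi--Fischer projection $P_k^{\pm,r}=1\pm\frac{D_u u}{m+2k-2}$ (with $D_u u$ acting from the right), not the left $P_k^\pm=1\pm\frac{uD_u}{m+2k-2}$; so the identity you sketch, $\big(uD_uP,Q\big)_u=\big(P,uD_uQ\big)_u$, has $uD_u$ on the wrong side in the first slot. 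Concretely, writing $g(x,\cdot)$ as valued in the right module $\Mkk^r\,u$ (this is the left/right convention implicitly in force when one writes $gT_k$, $gR_k$), the two identities one actually needs are
\[
\int\displaylimits_{\Sm} q(u)\,u\,f(u)\,dS(u)=0 \quad (q\in\Mkk^r,\ f\in\Mk),
\qquad
\int\displaylimits_{\Sm} u\,h(u)\,p(u)\,dS(u)=0 \quad (h\in\Mkk,\ p\in\Mk).
\]
The first follows at once from the $L^2(\Sm)$-orthogonality of spherical harmonics of distinct degrees ($k-1$ versus $k+1$), but the second involves two degree-$k$ harmonics and genuinely requires the Almansi--Fischer orthogonality of $\Mk$ and $u\Mkk$ in $\Hk$ (which is where the Fischer-inner-product or reproducing-kernel argument from \cite{Bures,D} enters). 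Once these two vanishing results are in hand, your bookkeeping with the classical Stokes formula and Fubini on $\overline\Omega\times\Sm$ is correct and yields the stated formula.
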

\begin{thm}\cite{Li}\textbf{(Stokes' Theorem for $Q_k$)}\\
Let $\Omega$ and $\Omega '$ be defined as above. Then for $f,g\in C^{\infty}(\Rm,u\Mkk)$, we have
\begin{align*}
&\int\displaylimits_{\Omega}\big[(g(x,u)Q_k,f(x,u))_u+(g(x,u),Q_kf(x,u))_u\big]dx^m\\
=&\int\displaylimits_{\partial\Omega}(g(x,u),d\sigma_xf(x,u))_u.
\end{align*}
\end{thm}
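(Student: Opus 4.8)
The plan is to deduce the identity from the classical Stokes' theorem for the Dirac operator $D_x$, applied for each fixed $u\in\Sm$, and then to absorb the projection $P_k^-$ by using that it is symmetric with respect to the pairing $(\cdot,\cdot)_u$ and acts as the identity on $u\Mkk$.

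First, recall the classical Stokes' formula for the Dirac operator: for $\Clm$-valued functions $F,G\in C^{\infty}(\Omega',\Clm)$,
\begin{align*}
\int\displaylimits_{\Omega}\big[(GD_x)(x)F(x)+G(x)(D_xF)(x)\big]dx^m=\int\displaylimits_{\partial\Omega}G(x)d\sigma_xF(x),
\end{align*}
where $d\sigma_x=n(x)d\sigma(x)$. I would apply this with $F(x)=f(x,u)$ and $G(x)=g(x,u)$ for each fixed $u\in\Sm$, and then integrate both sides over $\Sm$ against $dS(u)$. Since the closure of $\Omega$ is compact and all functions involved are smooth, Fubini's theorem permits interchanging the two integrations, and we obtain
\begin{align*}
\int\displaylimits_{\Omega}\big[(gD_x,f)_u+(g,D_xf)_u\big]dx^m=\int\displaylimits_{\partial\Omega}(g,d\sigma_xf)_u.
\end{align*}

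It remains to rewrite the two volume terms in terms of $Q_k$. Since $D_x$ differentiates only in $x$, it commutes with $\Delta_u$ and preserves homogeneity in $u$; as $f(x,\cdot)\in u\Mkk\subseteq\Hk$ is harmonic in $u$, it follows that $D_xf(x,\cdot)\in\Hk$, so that $Q_kf=P_k^-D_xf$ is well defined. Using the symmetry of $P_k^-$ with respect to $(\cdot,\cdot)_u$, namely $(P_k^-P,Q)_u=(P,P_k^-Q)_u$ for homogeneous degree-$k$ polynomials $P,Q$, together with $P_k^-g=g$ (because $g$ is $u\Mkk$-valued), we get
\begin{align*}
(g,Q_kf)_u=(g,P_k^-D_xf)_u=(P_k^-g,D_xf)_u=(g,D_xf)_u.
\end{align*}
The same computation applied to the first entry of the pairing, now with $gD_x$ the right Dirac operator and the analogous relation $(\Phi\,P_k^-,\Psi)_u=(\Phi,P_k^-\Psi)_u$ relating the right and left projections, together with $P_k^-f=f$, gives $(gQ_k,f)_u=(gD_x,f)_u$. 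Substituting these two identities into the previous display yields precisely the Stokes' theorem for $Q_k$.

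The step I expect to be the main obstacle is the symmetry property $(P_k^-P,Q)_u=(P,P_k^-Q)_u$ of the projection $P_k^-=\frac{-uD_u}{m+2k-2}$ on homogeneous polynomials of degree $k$ (and its counterpart for the right projection). This is where the genuine work lies: it follows from integration by parts over $\Sm$ together with the algebraic relations among $u$, $D_u$ and the Euler operator, and it is the exact analogue of the projection identities underlying the Stokes' theorems for $R_k$ in \cite{D} and for $T_k,T_k^*$ in \cite{Ding0}. Once these are available, the reduction above is entirely routine.
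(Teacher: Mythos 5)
The paper itself does not prove this theorem; it simply records it and refers to \cite{Li}, so there is no in-paper proof to compare against. Your reduction --- apply the scalar Stokes' formula for $D_x$ pointwise in $u$, integrate over $\Sm$ by Fubini, and then replace $D_x$ by $Q_k=P_k^-D_x$ using that $P_k^-$ may be moved across the pairing and fixes $u\Mkk$ --- is exactly the mechanism used in \cite{Li} and in the analogous Stokes' theorems for $R_k$, $T_k$, $T_k^*$ in \cite{D} and \cite{Ding0}, so the approach is the standard one and is correct in outline.

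One phrasing caveat worth fixing if you write this up carefully: the pairing $(P,Q)_u=\int_{\Sm}P(u)Q(u)\,dS(u)$ is Clifford-algebra--valued and not symmetric, and the two slots carry \emph{right}- and \emph{left}-monogenic data respectively, so ``$P_k^-$ is symmetric'' is a slightly loose way to say what you actually need. The precise lemma is a \emph{mixed} orthogonality: $\int_{\Sm}p_k(u)\,u\,q_{k-1}(u)\,dS(u)=0$ for $p_k$ right monogenic of degree $k$ and $q_{k-1}$ left monogenic of degree $k-1$, together with its companion $\int_{\Sm}u\,p_{k-1}(u)\,q_k(u)\,dS(u)=0$. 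These two identities kill the cross terms $(gT_k,f)_u$ and $(g,T_kf)_u$ when you decompose $gD_x=gT_k+gQ_k$ and $D_xf=T_kf+Q_kf$, which is the content of your ``$P_k^-$ moves across the pairing'' step; they are proved in \cite{D} by the integration-by-parts argument you allude to. With that formulation substituted in, your proof is complete and matches the cited one.
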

We also omitted $P_k^-$ in the last equation for convenience. See \cite{Li} for more details.
\section{Integral Formulas for Fermionic Operators}
Recall that $\Dodd$ given in Section $2$ is
\begin{align*}
\Dodd=R_k\prod_{s=1}^{j-1}\big(a_sT_kT_k^*+b_sR_k^2\big),\ j\geq 2.
\end{align*}
For convenience, we let $\mathcal{D}_1=R_k$ and notice that $\Dodd$ is in terms of the Rarita-Schwinger type operators. This suggests us to apply the Stokes' Theorem for the Rarita-Schwinger type operators below.
	\begin{thm}\textbf{(Higher Order Borel-Pompeiu Formula)}\\
	Let $\Omega$ and $\Omega'$ be domains in $\Rm$ and suppose that the closure of $\Omega$ lies in $\Omega'$. Further, suppose the closure of $\Omega$ is compact  and $\partial\Omega$ is piecewise smooth. Suppose that $f(x,u)\in C^{\infty}(\Rm,\Mk)$ and $y\in\Omega^0$. Then
	\begin{align*}
	&\int\displaylimits_{\Omega}\big(E_k^{2j-1}(x-y,u,v),\Dodd f(x,u)\big)_udx^m\\
	=&\sum_{t=2}^{j}\int\displaylimits_{\partial\Omega}\big(E_k^{2t-1}(x-y,u,v),d\sigma_x(a_tT_k^*+b_tR_k)\mathcal{D}_{2t-3}f(x,u)\big)_u\\
	&-\sum_{t=2}^j\int\displaylimits_{\partial\Omega}\big(E_k^{2t-1}(x-y,u,v)T_k^*,d\sigma_xa_t\mathcal{D}_{2t-3}f(x,u)\big)_u\\
	&-\sum_{t=2}^j\int\displaylimits_{\partial\Omega}\big(E_k^{2t-1}(x-y,u,v)R_k,d\sigma_xb_t\mathcal{D}_{2t-3}f(x,u)\big)_u\\
	&+\int\displaylimits_{\partial\Omega}\big(E_k^1(x-y,u,v),d\sigma_xf(x,u)\big)_u-f(y,v).
	\end{align*}
	\end{thm}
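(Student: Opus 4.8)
The plan is to proceed by induction on $j$ (equivalently, by peeling off the outermost factors of $\Dodd$ one at a time), using the Stokes' Theorems of Section 3 as the basic building blocks and the reproducing property of the fundamental solutions $E_k^{2t-1}$ to collapse volume integrals into boundary terms. The base case $j=1$ is precisely the classical Borel-Pompeiu formula for the Rarita-Schwinger operator $R_k=\mathcal{D}_1$, which follows from Theorem \ref{StokesRk} applied to $g(x,u)=E_k^1(x-y,u,v)$ on $\Omega\setminus \overline{B(y,\varepsilon)}$, letting $\varepsilon\to 0$ and using that $E_k^1$ reproduces $f(y,v)$ on the small sphere; this produces the last line of the asserted identity.

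For the inductive step, I would write $\Dodd = \big(a_jT_kT_k^* + b_jR_k^2\big)\,\mathcal{D}_{2j-3}$ and set $h(x,u):=\mathcal{D}_{2j-3}f(x,u)$, so that $\Dodd f = a_jT_k(T_k^*h) + b_jR_k(R_k h)$. The key algebraic fact I expect to need is the factorization $\mathcal{D}_{2j-1}=\big(a_jT_kT_k^*+b_jR_k^2\big)\mathcal{D}_{2j-3}$ together with the corresponding relation among fundamental solutions: $E_k^{2j-1}$ is, up to the operator $a_jT_k T_k^* + b_j R_k^2$ acting appropriately, a "potential" whose image under that operator is $E_k^{2j-3}$ (more precisely, the fundamental solutions are nested so that applying one factor to $E_k^{2t-1}$ yields $E_k^{2t-3}$, with $E_k^1$ the fundamental solution of $R_k$). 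Granting this, I apply Stokes' Theorem for $T_k,T_k^*$ (Theorem \ref{StokesTk}) to the pair $g=E_k^{2j-1}(x-y,u,v)$, $f\mapsto T_k^* h$, and Stokes' Theorem for $R_k$ (Theorem \ref{StokesRk}) twice to the pair $g=E_k^{2j-1}(x-y,u,v)$, $f\mapsto R_k h$; moving the operators off $E_k^{2j-1}$ and onto the boundary converts $\int_\Omega (E_k^{2j-1},\mathcal{D}_{2j-1}f)_u$ into the three boundary sums indexed by $t=j$ in the statement, plus a leftover volume term $\int_\Omega (E_k^{2j-3}(x-y,u,v),\mathcal{D}_{2j-3}f(x,u))_u$ arising from the reproducing/nesting property of the fundamental solutions (again after excising $B(y,\varepsilon)$, but here the singularity of $E_k^{2j-1}$ is weaker, of order $m-2j+2$, so the small-sphere contribution vanishes as $\varepsilon\to 0$ for $j\ge 2$). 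Applying the induction hypothesis to this leftover term with $j$ replaced by $j-1$ produces the remaining boundary sums $t=2,\dots,j-1$ and the final two terms, completing the induction.

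The main obstacle I anticipate is bookkeeping the boundary terms correctly when commuting the Rarita-Schwinger type operators past the pairing $(\,\cdot\,,\,\cdot\,)_u$: each application of Stokes' Theorem moves \emph{one} power of an operator from $f$ to $g$ (turning it into the corresponding right operator acting on $E_k^{2j-1}$) and simultaneously generates a boundary integral with $d\sigma_x$ sandwiched between $g$ and the remaining operator applied to $f$. Keeping straight which of the three boundary sums (the one with $(a_tT_k^*+b_tR_k)\mathcal{D}_{2t-3}f$ on the right of $d\sigma_x$, versus the two with $E_k^{2t-1}T_k^*$ and $E_k^{2t-1}R_k$ on the left of $d\sigma_x$) receives which contribution — and verifying the signs and the constants $a_t,b_t$ match — is the delicate part. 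The other point requiring care is the precise statement and proof of the nesting identity $(a_jT_kT_k^*+b_jR_k^2)_x E_k^{2j-1}(x,u,v) \sim E_k^{2j-3}(x,u,v)$ (in the distributional sense, with the appropriate normalization of $\lambda_{2j-1}$ relative to $\lambda_{2j-3}$), which is implicit in the construction of $E_k^{2j-1}$ in \cite{Ding3} and which I would isolate as a preliminary lemma before running the induction. Once these two ingredients are in place, the rest is a routine iteration of Stokes' Theorem.
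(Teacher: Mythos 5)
Your proposal is correct and follows essentially the same route as the paper's proof: induction on $j$, excising a small ball around $y$ whose boundary contribution vanishes because the homogeneity of $E_k^{2j-1}$ near $y$ makes it scale like $r^{2j-2}$ for $j\ge 2$, two passes of the Stokes' Theorems for $T_k$ and $R_k$ to peel off the factor $a_jT_kT_k^*+b_jR_k^2$, and the nesting identity $(a_jT_kT_k^*+b_jR_k^2)E_k^{2j-1}=E_k^{2j-3}$ from \cite{Ding3} to reduce the leftover volume integral to the inductive hypothesis. The only cosmetic difference is that the paper takes $j=2$ as the base case (reducing there to the $R_k$ Borel-Pompeiu of \cite{D}) while you anchor at $j=1$, which is logically equivalent.
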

	\begin{proof}
	Since $R_k$ and $b_sR_k^2+a_sT_kT_k^*$ commute (see \cite{Ding3}), we have $\Dodd=(a_{j-1}T_kT_k^*+b_{j-1}R_k^2)\mathcal{D}_{2j-3}$. This indicates that we can prove the theorem above by induction.\\
	Let $B_r=\{x\in\Omega:\ ||x-y||<r\}\subset \Omega$ for some sufficiently small $r>0$ and $B_r^c=\Omega\backslash B_r$. For convenience, we denote $E_k^{2j-1}=E_k^{2j-1}(x-y,u,v)$ unless it is necessary to specify its dependence on the variables. If $j=2$, we have
	\begin{align}
	&\int\displaylimits_{\Omega}\big(E_k^{3},\mathcal{D}_3 f(x,u)\big)_udx^m\nonumber\\
	=&\int\displaylimits_{B_r}\big(E_k^{3},\mathcal{D}_3 f(x,u)\big)_udx^m+\int\displaylimits_{B_r^c}\big(E_k^{3},\mathcal{D}_3 f(x,u)\big)_udx^m.\label{odd1}
	\end{align}
	Now, we consider the first integral
	\begin{align*}
	&\int\displaylimits_{B_r}\big(E_k^{3}(x-y,u,v),\mathcal{D}_3 f(x,u)\big)_udx^m\\
	=&\int\displaylimits_{B_r}\int\displaylimits_{\Sm}\frac{x-y}{||x-y||^{m-2}}Z_k(\frac{(x-y)u(x-y)}{||x-y||^2},v)\mathcal{D}_3 f(x,u)dS(u)dx^m.
	\end{align*}
	Since the homogeneity of $x-y$ in the integrand is $3-m$, we can see that the integral above goes to zero when $r$ goes to zero. Therefore, we only need to deal with the second integral in (\ref{odd1}). That is,
	\begin{align*}
	\int\displaylimits_{B_r^c}\big(E_k^{3},\mathcal{D}_3 f(x,u)\big)_udx^m=\int\displaylimits_{B_r^c}\big(E_k^{3},(a_1T_kT_k^*+b_1R_k^2)R_k f(x,u)\big)_udx^m.
	\end{align*}
	Now, we apply the Stokes' Theorems for $T_k$ and $R_k$ to the integral above, it becomes
	\begin{align*}
	&\int\displaylimits_{\partial\Omega}\big(E_k^{3},d\sigma_x(a_1T_k^*+b_1R_k)R_k f(x,u)\big)_u\\
	&-\int\displaylimits_{\partial B_r}\big(E_k^{3},d\sigma_x(a_1T_k^*+b_1R_k)R_k f(x,u)\big)_u\\
	&-\int\displaylimits_{B_r^c}\big(E_k^{3}T_k^*,a_1T_k^*R_k f(x,u)\big)_udx^m
	-\int\displaylimits_{B_r^c}\big(E_k^{3}R_k,b_1R_kR_k f(x,u)\big)_udx^m\\
	=&\int\displaylimits_{\partial\Omega}\big(E_k^{3},d\sigma_x(a_1T_k^*+b_1R_k)R_k f(x,u)\big)_u-\int\displaylimits_{B_r^c}\big(E_k^{3}T_k^*,a_1T_k^*R_k f(x,u)\big)_udx^m\\
	&-\int\displaylimits_{B_r^c}\big(E_k^{3}R_k,b_1R_kR_k f(x,u)\big)_udx^m.
	\end{align*}
	The integral on $\partial B_r$ vanishes because of the homogeneity of $x-y$ in $E_k^3$, which indicates that the integral goes to zero when $r$ goes to zero. Now, we apply the Stokes' Theorems to the last two integrals on $B_r^c$. We will omit the integrals over $\partial B_r$ below, since they also go to zero for the same reason we already mentioned. 
	\begin{align*}
	&\int\displaylimits_{\partial\Omega}\big(E_k^{3},d\sigma_x(a_1T_k^*+b_1R_k)R_k f(x,u)\big)_u-\int\displaylimits_{\partial\Omega}\big(E_k^{3}T_k^*,d\sigma_xa_1R_k f(x,u)\big)_u\\
	&+\int\displaylimits_{B_r^c}\big(E_k^{3}T_k^*T_ka_1,R_k f(x,u)\big)_udx^m-\int\displaylimits_{\partial\Omega}\big(E_k^{3}R_k,d\sigma_xb_1R_k f(x,u)\big)_u\\
	&+\int\displaylimits_{B_r^c}\big(E_k^{3}R_k^2b_1,R_k f(x,u)\big)_udx^m\\
	=&\int\displaylimits_{\partial\Omega}\big(E_k^{3},d\sigma_x(a_1T_k^*+b_1R_k)R_k f(x,u)\big)_u-\int\displaylimits_{\partial\Omega}\big(E_k^{3}T_k^*,d\sigma_xa_1R_k f(x,u)\big)_u\\
	&+\int\displaylimits_{B_r^c}\big(E_k^{3}(T_k^*T_ka_1+R_k^2b_1),R_k f(x,u)\big)_udx^m\\
	&-\int\displaylimits_{\partial\Omega}\big(E_k^{3}R_k,d\sigma_xb_1R_k f(x,u)\big)_u.
	\end{align*}
	In \cite{Ding3}, we have shown that $(a_{j-1}T_kT_k^*+b_{j-1}R_k^2)E_k^{2j-1}(x,u,v)=E^{2j-3}(x,u,v)$ for integers $j>1$. Therefore, the integral above over $B_r^c$ becomes
	\begin{align*}
	&\int\displaylimits_{B_r^c}\big(E_k^1(x-y,u,v),R_k f(x,u)\big)_udx^m\\
	=&\int\displaylimits_{\partial\Omega}\big(E_k^1(x-y,u,v),d\sigma_x f(x,u)\big)_u-f(y,v).
	\end{align*}
	It is worth pointing out that this equation is actually Theorem $7$ in \cite{D}. Hence, we showed that
	\begin{align*}
	&\int\displaylimits_{\Omega}\big(E_k^{3},\mathcal{D}_3 f(x,u)\big)_udx^m\\
	=&\int\displaylimits_{\partial\Omega}\big(E_k^{3},d\sigma_x(a_1T_k^*+b_1R_k)R_k f(x,u)\big)_u-\int\displaylimits_{\partial\Omega}\big(E_k^{3}T_k^*,d\sigma_xa_1R_k f(x,u)\big)_u\\
	&-\int\displaylimits_{\partial\Omega}\big(E_k^{3}R_k,d\sigma_xb_1R_k f(x,u)\big)_u+\int\displaylimits_{\partial\Omega}\big(E_k^1(x-y,u,v),d\sigma_x f(x,u)\big)_u\\
	&-f(y,v),
	\end{align*}
	which is our theorem with $j=2$.\\
	Assume that our theorem is true for $\mathcal{D}_{2j-3}$. Now we consider the case $\Dodd$, the argument is very similar as in the case $j=2$. First, we have
	\begin{align*}
	&\int\displaylimits_{\Omega}\big(E_k^{2j-1},\Dodd f(x,u)\big)_udx^m\nonumber\\
	=&\int\displaylimits_{B_r}\big(E_k^{2j-1},\Dodd f(x,u)\big)_udx^m+\int\displaylimits_{B_r^c}\big(E_k^{2j-1},\Dodd f(x,u)\big)_udx^m.
	\end{align*}
	 Since the first integral goes to zero when $r$ goes to zero with a similar argument as for (\ref{odd1}), we only need to deal with the second integral, that is
	 \begin{align*}
	 &\int\displaylimits_{B_r^c}\big(E_k^{2j-1},\Dodd f(x,u)\big)_udx^m\\
	 =&\int\displaylimits_{B_r^c}\big(E_k^{2j-1},(a_jT_kT_k^*+B_jR_k^2)\mathcal{D}_{2j-3} f(x,u)\big)_udx^m.
	 \end{align*}
	 Then we apply the Stokes' Theorems for $T_k$ and $R_k$ to the integral above, we have
	 \begin{align*}
	 &\int\displaylimits_{\partial\Omega}\big(E_k^{2j-1},d\sigma_x(a_jT_k^*+b_jR_k)\mathcal{D}_{2j-3} f(x,u)\big)_u\\
	 &-\int\displaylimits_{\partial B_r}\big(E_k^{2j-1},d\sigma_x(a_jT_k^*+b_jR_k)\mathcal{D}_{2j-3} f(x,u)\big)_u\\
	&-\int\displaylimits_{B_r^c}\big(E_k^{2j-1}T_k^*,a_jT_k^*\mathcal{D}_{2j-3} f(x,u)\big)_udx^m\\
	&-\int\displaylimits_{B_r^c}\big(E_k^{2j-1}R_k,b_jR_k\mathcal{D}_{2j-3} f(x,u)\big)_udx^m\\
	=&\int\displaylimits_{\partial\Omega}\big(E_k^{2j-1},d\sigma_x(a_jT_k^*+b_jR_k)\mathcal{D}_{2j-3} f(x,u)\big)_u\\
	&-\int\displaylimits_{B_r^c}\big(E_k^{2j-1}T_k^*,a_jT_k^*\mathcal{D}_{2j-3} f(x,u)\big)_udx^m\\
	&-\int\displaylimits_{B_r^c}\big(E_k^{2j-1}R_k,b_jR_k\mathcal{D}_{2j-3} f(x,u)\big)_udx^m.
	 \end{align*}
	 The integral over $\partial B_r$ vanishes, since it goes to zero when $r$ goes to zero because of the homogeneity of $x-y$ in $E_k^{2j-1}$. We next apply Stokes' Theorems to the last two integrals over $B_r^c$ to obtain
	 \begin{align}
	&\int\displaylimits_{\partial\Omega}\big(E_k^{2j-1},d\sigma_x(a_jT_k^*+b_jR_k)\mathcal{D}_{2j-3} f(x,u)\big)_u\nonumber\\
	&-\int\displaylimits_{\partial\Omega}\big(E_k^{2j-1}T_k^*,d\sigma_xa_j\mathcal{D}_{2j-3} f(x,u)\big)_u\nonumber\\
	&+\int\displaylimits_{B_r^c}\big(E_k^{2j-1}T_k^*T_ka_j,\mathcal{D}_{2j-3} f(x,u)\big)_udx^m\nonumber\\
	&-\int\displaylimits_{\partial\Omega}\big(E_k^{2j-1}R_k,d\sigma_xb_j\mathcal{D}_{2j-3} f(x,u)\big)_u\nonumber\\
	&+\int\displaylimits_{B_r^c}\big(E_k^{2j-1}R_k^2b_j,\mathcal{D}_{2j-3} f(x,u)\big)_udx^m\nonumber\\
	=&\int\displaylimits_{\partial\Omega}\big(E_k^{2j-1},d\sigma_x(a_jT_k^*+b_jR_k)\mathcal{D}_{2j-3} f(x,u)\big)_u\nonumber\\
	&+\int\displaylimits_{B_r^c}\big(E_k^{2j-1}(T_k^*T_ka_j+R_k^2b_j),\mathcal{D}_{2j-3} f(x,u)\big)_udx^m\nonumber\\
	&-\int\displaylimits_{\partial\Omega}\big(E_k^{2j-1}T_k^*,d\sigma_xa_j\mathcal{D}_{2j-3} f(x,u)\big)_u\nonumber\\
	&-\int\displaylimits_{\partial\Omega}\big(E_k^{2j-1}R_k,d\sigma_xb_j\mathcal{D}_{2j-3} f(x,u)\big)_u.\label{odd2}
	\end{align}
	Recall that $(a_jT_kT_k^*+b_jR_k^2)E_k^{2j-1}=E_k^{2j-3}$ \cite{Ding3}, with the assumption of the truth for the case $\mathcal{D}_{2j-3}$, the last integral becomes
	\begin{align*}
	&\int\displaylimits_{B_r^c}\big(E_k^{2j-3},\mathcal{D}_{2j-3} f(x,u)\big)_udx^m\\
	=&\sum_{t=2}^{j-1}\int\displaylimits_{\partial\Omega}\big(E_k^{2t-1},d\sigma_x(a_tT_k^*+b_tR_k)\mathcal{D}_{2t-3}f(x,u)\big)_u\\
	&-\sum_{t=2}^{j-1}\int\displaylimits_{\partial\Omega}\big(E_k^{2t-1}T_k^*,d\sigma_xa_t\mathcal{D}_{2t-3}f(x,u)\big)_u\\
	&-\sum_{t=2}^{j-1}\int\displaylimits_{\partial\Omega}\big(E_k^{2t-1}R_k,d\sigma_xb_t\mathcal{D}_{2t-3}f(x,u)\big)_u\\
	&+\int\displaylimits_{\partial\Omega}\big(E_k^1,d\sigma_xf(x,u)\big)_u-f(y,v).
	\end{align*}
	Plugging this back into (\ref{odd2}), we obtain
	\begin{align*}
	&\int\displaylimits_{\Omega}\big(E_k^{2j-1},\Dodd f(x,u)\big)_udx^m\\
	=&\sum_{t=2}^{j}\int\displaylimits_{\partial\Omega}\big(E_k^{2t-1},d\sigma_x(a_tT_k^*+b_tR_k)\mathcal{D}_{2t-3}f(x,u)\big)_u\\
	&-\sum_{t=2}^{j}\int\displaylimits_{\partial\Omega}\big(E_k^{2t-1}T_k^*,d\sigma_xa_t\mathcal{D}_{2t-3}f(x,u)\big)_u\\
	&-\sum_{t=2}^{j}\int\displaylimits_{\partial\Omega}\big(E_k^{2t-1}R_k,d\sigma_xb_t\mathcal{D}_{2t-3}f(x,u)\big)_u\\
	&+\int\displaylimits_{\partial\Omega}\big(E_k^1,d\sigma_xf(x,u)\big)_u-f(y,v).
	\end{align*}
	This completes the proof for the case $\Dodd$ and the theorem.
 	\end{proof}
	In particular, if we assume $\Dodd f=0$, we can obtain higher order Cauchy's integral formulas immediately.
	\begin{thm}\textbf{(Higher Order Cauchy's Integral Formula)}\\
	Let $\Omega$ and $\Omega'$ be defined as in the previous theorem. Suppose that $f(x,u)\in C^{\infty}(\Rm,\Mk)$, $\Dodd f=0$ and $y\in\Omega^0$. Then
	\begin{align*}
	f(y,v)=&\sum_{t=2}^{j}\int\displaylimits_{\partial\Omega}\big(E_k^{2t-1}(x-y,u,v),d\sigma_x(a_tT_k^*+b_tR_k)\mathcal{D}_{2t-3}f(x,u)\big)_u\\
	&-\sum_{t=2}^j\int\displaylimits_{\partial\Omega}\big(E_k^{2t-1}(x-y,u,v)T_k^*,d\sigma_xa_t\mathcal{D}_{2t-3}f(x,u)\big)_u\\
	&-\sum_{t=2}^j\int\displaylimits_{\partial\Omega}\big(E_k^{2t-1}(x-y,u,v)R_k,d\sigma_xb_t\mathcal{D}_{2t-3}f(x,u)\big)_u\\
	&+\int\displaylimits_{\partial\Omega}\big(E_k^1(x-y,u,v),d\sigma_xf(x,u)\big)_u.
	\end{align*}
	\end{thm}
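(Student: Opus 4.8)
The plan is to prove the higher order Borel--Pompeiu formula by induction on $j$, using the factorization $\Dodd=(a_{j-1}T_kT_k^*+b_{j-1}R_k^2)\mathcal{D}_{2j-3}$ (valid because $R_k$ commutes with each factor $a_sT_kT_k^*+b_sR_k^2$, as established in \cite{Ding3}) together with the three Stokes' Theorems recalled in Section~3 and the recursion $(a_{j-1}T_kT_k^*+b_{j-1}R_k^2)E_k^{2j-1}(x,u,v)=E_k^{2j-3}(x,u,v)$ from \cite{Ding3}. First I would isolate the singularity of the fundamental solution: set $B_r=\{x\in\Omega:\|x-y\|<r\}$ and $B_r^c=\Omega\setminus B_r$, split the domain integral accordingly, and observe that because $E_k^{2j-1}$ carries homogeneity $x-y$ of degree $2j-1-m\ge 3-m$, the integral over $B_r$ and all boundary integrals over $\partial B_r$ that subsequently appear vanish as $r\to 0$. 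This reduces everything to $B_r^c$, where the operators act on smooth functions and Stokes applies freely.

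Next I would carry out the base case $j=2$. Writing $\mathcal{D}_3=(a_1T_kT_k^*+b_1R_k^2)R_k$, I apply the Stokes' Theorem for $T_k,T_k^*$ (Theorem~\ref{StokesTk}) and for $R_k$ (Theorem~\ref{StokesRk}) to move the outer factor $a_1T_kT_k^*+b_1R_k^2$ off $\mathcal{D}_3f=(a_1T_kT_k^*+b_1R_k^2)R_kf$ and onto $E_k^3$, at the cost of boundary terms over $\partial\Omega$ of the form $\int_{\partial\Omega}(E_k^3,d\sigma_x(a_1T_k^*+b_1R_k)R_kf)_u$, $-\int_{\partial\Omega}(E_k^3T_k^*,d\sigma_x a_1R_kf)_u$, $-\int_{\partial\Omega}(E_k^3R_k,d\sigma_x b_1R_kf)_u$. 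The volume term collapses via $(a_1T_kT_k^*+b_1R_k^2)E_k^3=E_k^1$ to $\int_{B_r^c}(E_k^1,R_kf)_u\,dx^m$, which is exactly the setting of the first-order Borel--Pompeiu formula (Theorem~7 of \cite{D}) and therefore equals $\int_{\partial\Omega}(E_k^1,d\sigma_x f)_u-f(y,v)$; assembling these pieces gives the claimed identity for $j=2$. For the inductive step I repeat the same maneuver: peel off the outermost factor $a_{j-1}T_kT_k^*+b_{j-1}R_k^2$ from $\Dodd f$, generate the three $\partial\Omega$ boundary terms indexed by $t=j$, and use $(a_{j-1}T_kT_k^*+b_{j-1}R_k^2)E_k^{2j-1}=E_k^{2j-3}$ to reduce the remaining volume integral to $\int_{B_r^c}(E_k^{2j-3},\mathcal{D}_{2j-3}f)_u\,dx^m$, to which the induction hypothesis applies, producing the sums over $t=2,\dots,j-1$ plus the $E_k^1$ term and $-f(y,v)$; merging the new $t=j$ terms with the inherited ones yields the sums over $t=2,\dots,j$.

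The main obstacle — and the step I would present most carefully — is the bookkeeping in the double application of Stokes in the inductive step: one must track that applying the $T_k/T_k^*$ Stokes' Theorem to $\int_{B_r^c}(E_k^{2j-1}T_k^*,a_jT_k^*\mathcal{D}_{2j-3}f)_u\,dx^m$ and the $R_k$ Stokes' Theorem to $\int_{B_r^c}(E_k^{2j-1}R_k,b_jR_k\mathcal{D}_{2j-3}f)_u\,dx^m$ produces precisely the volume term $\int_{B_r^c}(E_k^{2j-1}(T_k^*T_ka_j+R_k^2b_j),\mathcal{D}_{2j-3}f)_u\,dx^m$ with the correct sign and with the operators acting on $E_k^{2j-1}$ from the right (so that $E_k^{2j-1}T_k^*T_k$, etc., is interpreted via the right Rarita--Schwinger type operators, exactly the convention flagged after Theorem~\ref{StokesRk}), and that this right action reassembles into $(a_jT_kT_k^*+b_jR_k^2)E_k^{2j-1}=E_k^{2j-3}$. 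I would also note explicitly why every intermediate $\partial B_r$ integral vanishes — the integrand always contains $E_k^{2\ell-1}$ for some $\ell\ge 2$, homogeneous of degree $2\ell-1-m$ in $x-y$, against a surface measure of size $r^{m-1}$, so the contribution is $O(r^{2\ell-1})=O(r^3)\to 0$ — since this is the one analytic (as opposed to purely algebraic) point in the argument. The Cauchy integral formula is then immediate: impose $\Dodd f=0$ on the left-hand side of the Borel--Pompeiu formula, so the volume integral is zero, and solve for $f(y,v)$.
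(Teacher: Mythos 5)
Your proposal takes essentially the same approach as the paper: you obtain the Cauchy integral formula by imposing $\Dodd f=0$ in the Borel--Pompeiu formula, and your reconstruction of the Borel--Pompeiu argument (induction on $j$, isolating the singularity over $B_r$, applying the $R_k$ and $T_k/T_k^*$ Stokes' theorems on $B_r^c$, using the recursion $(a_{j-1}T_kT_k^*+b_{j-1}R_k^2)E_k^{2j-1}=E_k^{2j-3}$, and reducing the base case to Theorem~7 of \cite{D}) mirrors the paper's proof step for step.
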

	\begin{rem}
	We notice that the argument here does not work for the higher order bosonic operator cases, since bosonic operators act on functions taking values in scalar-valued homogeneous harmonic polynomials, and the Almansi-Fischer decomposition fails in this circumstance. We shall discuss the bosonic cases in future work with different techniques.
	\end{rem}
\subsection*{Acknowledgment}
The author is grateful to the anonymous referees for detailed comments.

\end{document}